\documentclass{article}
\usepackage{graphicx} 
\usepackage{amsmath}
\usepackage{amsfonts}
\usepackage{amssymb}
\usepackage{tikz-cd}
\usepackage{amsthm}
\usepackage{mathrsfs}
\usepackage{changepage}
\usepackage{setspace}
\usepackage{fancyhdr}

\DeclareMathOperator{\Ind}{Ind}

\DeclareMathOperator{\dcl}{dcl}

\def\Ind#1#2{#1\setbox0=\hbox{$#1x$}\kern\wd0\hbox to 0pt{\hss$#1\mid$\hss}
\lower.9\ht0\hbox to 0pt{\hss$#1\smile$\hss}\kern\wd0}

\def\notind#1#2{#1\setbox0=\hbox{$#1x$}\kern\wd0
\hbox to 0pt{\mathchardef\nn=12854\hss$#1\nn$\kern1.4\wd0\hss}
\hbox to 0pt{\hss$#1\mid$\hss}\lower.9\ht0 \hbox to 0pt{\hss$#1\smile$\hss}\kern\wd0}

\newtheorem{theorem}{Theorem}[section]
\newtheorem{lemma}[theorem]{Lemma}

\theoremstyle{remark}

\theoremstyle{definition}
\newtheorem{definition}[theorem]{Definition}

\theoremstyle{definition}

\theoremstyle{definition}
\newtheorem*{exercise*}{Exercise}

\theoremstyle{definition}

\title{Some More Rigid Real Closed Fields}
\author{Michael Lange}
\date{August 2026}

\begin{document}

\onehalfspacing
\setlength{\headheight}{14pt}   
\setlength{\headsep}{8pt} 
\newpage
\maketitle

\maketitle

\section{Intro and Notation}
In [1], the authors prove the existence of countable non-Archimedean real closed fields which are rigid, i.e. have no non-trivial automorphisms. More specifically, they construct such a field which has transcendence degree 2 over the real algebraic numbers. In the following, we show how to extend their construction to produce such extensions for all finite transcendence degrees as well as countably infinite transcendence degree over the real algebraics.
\\
\\
RCF denotes the theory of real closed fields in the language of ordered fields. We let $k$ denote the field of real algebraic numbers, which is the prime model of RCF. Throughout, ``definable" will always mean ``$k$-definable" (equivalently, $\emptyset$-definable). We also fix $R$, a large saturated model of RCF which will contain all the extensions we consider. We will use several standard facts about o-minimal expansions of RCF, such as cell decomposition for definable sets, and piecewise monotonicity, continuity, and differentiability for definable functions. As reference for such facts, see [2]. On $R^n$, the map $\pi_i$ will denote projection onto the $i$th coordinate and $\pi_{\leq i}$ projection onto the first $i$ coordinates. We will only have to deal with open (full dimension) cells, so we mention some abbreviated notation for describing these. In general, an open cell definable over $k$ may be given as:
\[(\alpha,\beta)\times (g_2,h_2)\times\dots\times(g_n,h_n)\]
where $\alpha<\beta\in k\cup\{\pm\infty\}$ and $g_i,h_i$ are understood to be definable functions $R^{i-1}\to R$ (or possibly the ``constant functions" at $\pm \infty$) such that $g_i<h_i$ on the preceding product $(\alpha,\beta)\times\dots\times(g_{i-1},h_{i-1})$. Then the product above denotes the definable set of points \[\{(x_1,\dots,x_n)\in R^n: \alpha<x_i<\beta\land \bigwedge _{2\leq i\leq n}g_i(x_{<i})<x_i<h_i(x_{<i})\}\]

We will also use some abbreviated notation for other definable subsets of $R^n$. For example, $\{x_1>0\}$ will denote the set of points $x\in R^n$ such that $\pi_1(x)>0$.

\section{Constructing rigid fields}
A key fact for the main argument is that RCF (or any of its o-minimal expansions) has definable Skolem functions, which implies that for any subset $S$ of a model $N$, the definable closure of $S$ is an elementary submodel of $N$; this is clearly the smallest such submodel containing $S$. Given some elements $a_1,\dots,a_n\in R$ we use $k\langle a_1,\dots,a_n\rangle$ to denote this field, the real closure of $a_1,\dots,a_n$.
\\
\\
To produce a rigid non-Archimedean extension of finite transcendence degree over $k$, we employ exactly the same main argument as used in [1]. The extensions we construct will all have the form $k\langle a_1,\dots, a_n\rangle$ where $a_1>\alpha$ for all $\alpha\in k$ and all $a_i$ for $i\geq 2$ are in the convex hull of $k\langle a_1\rangle$ inside $R$. Given an automorphism $\sigma$ of such an extension $k\langle a_1,\dots, a_n\rangle$, we know that $\sigma(a_1),\dots,\sigma(a_n)\equiv a_1,\dots, a_n$. Also, since $k\langle a_1,\dots, a_n\rangle$ is the definable closure of $a_1,\dots,a_n$, there is some definable function $F_\sigma:R^n\to R^n$ such that $F_\sigma(a_1,\dots,a_n)=(\sigma(a_1),\dots,\sigma(a_n))$. If we wish to ensure that $k\langle a_1,\dots, a_n\rangle$ is rigid, it will suffice to ensure that every such definable map $F$ either fixes $(a_1,\dots,a_n)$ or else fails to preserve the type of this tuple. In the latter case, $F$ cannot be $F_\sigma$ for any automorphism $\sigma$. Hence, under our desired condition, for any automorphism $\sigma$, $F_\sigma$ must fix $(a_1,\dots,a_n)$ and so $\sigma$ must be the identity map.

To state the key lemma for this argument, we recall a definition from [1] (suitably adjusted for $n$ dimensions):
\begin{definition}
    An end-cell in $R^n$ is a definable set of the form 
    \[(\alpha,\infty)\times(g_2,h_2)\times\dots( g_n,h_n)\]
    where each $g_i,h_i$ is further assumed to be continuous on $(\alpha,\infty)\times\dots\times(g_{i-1},h_{i-1})$
\end{definition}

The main lemma we will need to complete this argument is the following generalization of Lemma 2.4 from [1]:

\begin{lemma}\label{MainLemma} Given a definable map $F:R^n\to R^n$ and an end cell $C$, there exists a possibly smaller end-cell $C'\subseteq C$ such that either $F\upharpoonright C'$ is the identity, or $F(C')\cap C'=\emptyset$. In fact, in the latter case we can ensure that the closure $\overline{C'}$ is disjoint from $F(C')$.
\end{lemma}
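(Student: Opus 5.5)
The plan is to reduce, by cell decomposition, to the case where $F$ is continuous and each coordinate comparison between $F(x)$ and $x$ has constant sign. Then, in the non-identity case, we shrink the cell along one coordinate so that $F$ pushes every point out of the cell.

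\textbf{Step 1 (uniformizing).} Write $F=(F_1,\dots,F_n)$. Take a cell decomposition of $R^n$ partitioning $C$ such that, on each cell, $F$ is continuous and each of the relations $F_j(x)<x_j$, $F_j(x)=x_j$, $F_j(x)>x_j$ has constant truth value. For $x_1$ sufficiently large, the fibres of $C$ are open and are covered, up to lower-dimensional pieces, by fibres of the open cells of the decomposition. So some open cell $D\subseteq C$ has $\pi_1(D)=(\alpha',\infty)$. The boundary functions of $D$ are continuous off finitely many points of $(\alpha',\infty)$, and at higher levels continuous after further refinement. Enlarging $\alpha'$ and re-decomposing if necessary, $D$ is an end-cell. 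If $F_j=x_j$ on $D$ for all $j$, then $F\upharpoonright D$ is the identity and we are done. Otherwise, let $i$ be least with $F_i\neq x_i$ on $D$; by symmetry $F_i(x)>x_i$ on $D$, while $F_j(x)=x_j$ for $j<i$.

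\textbf{Step 2 (a uniform gap).} Write $D=(\alpha',\infty)\times(g_2,h_2)\times\dots\times(g_n,h_n)$. Using the continuous definable "midpoint" choices, we get a definable continuous section $p(x_{\le i})$ of the fibre of $D$ over $x_{\le i}$, lying in coordinates $i+1,\dots,n$. Define
\[
\delta(x_{\le i})=\tfrac12\big(F_i(x_{\le i},p(x_{\le i}))-x_i\big)>0 .
\]
By continuity of $F_i$ and definable choice, there is a definable positive $\varepsilon(x_{\le i})$ such that $F_i>x_i+\delta$ on the box of radius $\varepsilon$ around $(x_{\le i},p)$ in the last $n-i$ coordinates. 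Replace the coordinates $j>i$ by the cell $p_j\pm\varepsilon'$, where $\varepsilon'$ is a continuous positive definable function below $\varepsilon$, obtained after shrinking $\alpha'$ using piecewise continuity. This gives a sub-end-cell on which $F_i(x)>x_i+\delta(x_{\le i})$.

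\textbf{Step 3 (thin the $i$-th coordinate).} I now want to replace $(g_i,h_i)$ by a thin interval $(u(x_{<i}),u(x_{<i})+w(x_{<i}))$. Here $u$ is a continuous definable choice inside $(g_i,h_i)$, and $w>0$ is continuous with
\[
w(x_{<i})<\inf\{\delta(x_{<i},t): t\in[u,u+w]\}.
\]
Such $w$ exists pointwise because $\delta$ is continuous and positive near $t=u$. The condition is definable in $x_{<i}$, so a definable choice of $w$ exists and, after shrinking $\alpha'$ and cell-decomposing once more, it can be taken continuous on a smaller end-cell. On the resulting end-cell $C'$, every $x\in C'$ satisfies
\[
F_i(x)>x_i+\delta\ge u+w,
\]
and $F(x)_{<i}=x_{<i}$. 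Since the fibre of $\overline{C'}$ over $x_{<i}$ in coordinate $i$ lies in $[u,u+w]$, we should get $F(x)\notin\overline{C'}$. To secure this, I would build a strict margin into the choice of $w$, for instance replacing $\delta$ by $\delta/2$. This gives $F(C')\cap\overline{C'}=\emptyset$.

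\textbf{Main obstacle.} The main obstacle I expect is in Steps 2 and 3: keeping every choice definable, positive and continuous while the domain remains an end-cell with first coordinate unbounded. Each definable choice is continuous only piecewise, so one must repeatedly re-decompose and pass to the piece that is unbounded in $x_1$. The point to verify is that at every stage there is still an open piece with first projection a final segment $(\alpha'',\infty)$. This should follow, as in Step 1, from $o$-minimality, since only finitely many exceptional $x_1$-values arise at each level.
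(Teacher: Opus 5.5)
Your argument has a genuine gap in the case $i=1$, i.e.\ when $F$ moves the first coordinate on $D$. This is exactly where the content of the lemma lies.

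In Step 3 you replace the range of the $i$-th coordinate by $(u(x_{<i}),u(x_{<i})+w(x_{<i}))$. For $i=1$ there are no earlier variables, so $u,w$ are constants and the resulting set has bounded first projection. It is therefore not an end-cell. Nothing in Step 1 excludes $i=1$. Your ``by symmetry'' is also unavailable there: $F_1<x_1$ and $F_1>x_1$ are not interchangeable, since end-cells must be unbounded to the right.

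For a concrete instance, take $n=2$, $F(x_1,x_2)=(x_1+1,x_2)$ and $C=(0,\infty)\times(0,1)$. Then $F$ has no fixed points and $F_1>x_1$ everywhere. If $C'=(\alpha,\infty)\times(g(x_1),h(x_1))$ is to satisfy $F(C')\cap C'=\emptyset$, you need $(g(x),h(x))\cap(g(x+1),h(x+1))=\emptyset$ for all $x>\alpha$. So the fibres of $C'$ must move with $x_1$, as in $(1,\infty)\times(\frac{1}{x_1+1},\frac{1}{x_1})$. Nothing in Steps 2--3 produces a cell of this kind.

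When $i\ge 2$ your argument does work. There $F$ fixes $x_{<i}$, so thinning the $i$-th fibre uniformly over $x_{<i}$ separates $F(C')$ from $\overline{C'}$.

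What is missing is a way to turn motion in $x_1$ into separation in some coordinate $x_j$ with $j\ge 2$. The paper gets this from definable curves $x\mapsto(x,f(x))$ inside a one-parameter end-cell (Lemma~\ref{ParamLemma}).
\begin{itemize}
\item If $\pi_1\circ F$ stays bounded along some such curve, the cell is cut off to the right of the bound (Case 2).
\item Otherwise $F$ carries $\Gamma(f)$ onto a tail of the graph of $f^*=\nu_f\circ\mu_f^{-1}$. If $f^*\neq f$ eventually, the paper separates the two graphs in some coordinate $j\ge 2$ by a tube around $\Gamma(f^*)$. It then shrinks $C$ so that its $j$-th fibres lie beyond the tube, and pulls the tube back under $F$ (Case 3).
\item The decisive step shows that if $F$ maps every such graph into itself, then $F$ has a fixed point. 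It uses the family $f_r$, $r\in[0,1]\cap k$, with a threshold made uniform in $r$. It adds the diagonal curve $f(x)=f_{\psi(x)}(x)$, which meets each $\Gamma(f_r)$ in exactly one point. This forces $(x,f(x))$ to be fixed by $F$.
\end{itemize}
Your proposal has no substitute for this chain of arguments, so it does not establish the lemma for maps that move the first coordinate.
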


The additional claim about closure will not be needed until we reach the case of infinite transcendence degree. We will prove Lemma \ref{MainLemma} in the following section. Here we give the proof of the first main result using Lemma \ref{MainLemma}:

\begin{theorem}\label{FiniteThm}
    There exist non-Archimedean rigid real closed fields of all finite transcendence degrees $n\geq 2$ over $k$.
\end{theorem}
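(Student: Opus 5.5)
We build a type $p(x_1,\dots,x_n)$ over $k$ by a countable construction, using Lemma \ref{MainLemma}, and then realize it in $R$. Since $k$ is countable, there are only countably many definable maps $F:R^n\to R^n$; enumerate them as $F_1,F_2,\dots$. We build a decreasing sequence of end-cells $C_0\supseteq C_1\supseteq C_2\supseteq\cdots$ in $R^n$. Take $C_0=(0,\infty)\times(-\infty,\infty)\times\dots$, or better, an end-cell that already forces $x_2,\dots,x_n$ into the convex hull of $k\langle x_1\rangle$, for instance $(0,\infty)\times(0,x_1)\times(0,x_1)\times\dots\times(0,x_1)$. Given $C_{m-1}$, apply Lemma \ref{MainLemma} to $F_m$ and $C_{m-1}$ to get an end-cell $C_m'\subseteq C_{m-1}$ on which $F_m$ is either the identity or satisfies $F_m(C_m')\cap C_m'=\emptyset$. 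To make the realized type complete we interleave a second enumeration: list all definable subsets $D_1,D_2,\dots$ of $R^n$, and at each stage shrink further to an end-cell contained in $D_m$ or in its complement. The main point to check is that such a shrinking is always possible. By cell decomposition of $D_m$ compatible with $C_m'$, some open cell of the decomposition lies inside $C_m'$ and is unbounded in $x_1$, since $C_m'$ has nonempty interior over every large $x_1$. Restricting $x_1$ to a tail $(\alpha,\infty)$ on which all the bounding functions are continuous, which piecewise continuity allows, gives an end-cell inside $D_m$ or inside its complement.

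Next, the set $\Sigma=\{x\in C_m : m\in\mathbb N\}\cup\{x_1>\beta:\beta\in k\}$ is finitely satisfiable, because each $C_m$ is nonempty and contains points with arbitrarily large first coordinate. By the interleaving, $\Sigma$ decides every $k$-definable set, so it generates a complete type $p$. By saturation of $R$ there is a realization $a=(a_1,\dots,a_n)\in R^n$. We then verify the required properties of $K=k\langle a\rangle$:
\begin{itemize}
\item $K$ is non-Archimedean, since $a_1>\beta$ for all $\beta\in k$.
\item $a_1,\dots,a_n$ are algebraically independent, since $p$ contains only open end-cells, so $a$ lies in no lower-dimensional definable set. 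Hence $K$ has transcendence degree $n$.
\item Each $a_i$ with $i\geq 2$ lies in $(0,a_1)$, hence in the convex hull of $k\langle a_1\rangle$, matching the setup described before Lemma \ref{MainLemma}.
\end{itemize}

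Finally, rigidity follows by the argument given before the theorem. Let $\sigma$ be an automorphism of $K$ and pick $m$ with $F_m=F_\sigma$. Then $a\in C_m$. If $F_m\upharpoonright C_m$ is the identity, then $\sigma(a)=a$, so $\sigma$ is trivial. Otherwise $F_m(C_m)\cap C_m=\emptyset$, so $F_m(a)\notin C_m$. But $F_m(a)=\sigma(a)\equiv a$ and $C_m$ is $k$-definable with $a\in C_m$, so $\sigma(a)\in C_m$, a contradiction.

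The only delicate step is the interleaved shrinking needed for completeness. Alternatively, completeness can be skipped entirely: take any realization $a$ of the partial type $\Sigma$, since rigidity only used $a\in C_m$ for all $m$ and $\sigma(a)\equiv a$. Independence then needs a separate check, which can be handled by the same shrinking applied only to the countably many lower-dimensional definable sets. The case $n\geq2$ imposes no further obstacle beyond Lemma \ref{MainLemma} itself.
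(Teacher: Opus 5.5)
Your proof is correct and follows the paper's argument. Both diagonalize against the countably many definable maps $R^n\to R^n$ by applying Lemma \ref{MainLemma} along a decreasing chain of end-cells, realize the resulting partial type together with $x_1>\beta$ for all $\beta\in k$, and derive rigidity from the fact that $\sigma(a)\equiv a$ would force $F_\sigma(a)$ into the $k$-definable cell that $F_\sigma$ maps off itself. The only difference is how algebraic independence is obtained: you interleave extra shrinking steps (completing the type, or avoiding the lower-dimensional sets), whereas the paper needs no extra steps and uses compactness, since every $C_i$ is a nonempty open set and so no finite part of the type can force a polynomial relation.
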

\begin{proof}
    Fix $n\geq 2$. Enumerate all definable maps $R^n\to R^n$ as $F_0,F_1,\dots$. Let $C_0:=(0,\infty)\times R^{n-1}$. Suppose $C_0\supseteq C_1\supseteq\dots\supseteq C_i$ have been defined such that $C_i$ is an end cell. Then let $C'_{i+1}$ be the end cell obtained by applying Lemma $\ref{MainLemma}$ to $C_i, F_i$ and take $C_{i+1}:=C'_{i+1}\cap \{i+1<x_1<\infty\}$. Now the intersection $\bigcap_{i<\omega} C_i$ may be viewed as a partial type $\tau(x_1,\dots,x_n)$ with the following properties: $\tau(x_1,\dots,x_n)\vdash x_1>\alpha$ for all $\alpha\in k$, and for any definable map $F:R^n\to R^n$, either $\tau(x_1,\dots,x_n)\vdash F(x_1,\dots,x_n)= (x_1,\dots,x_n)$ or else there is a formula $\varphi\in \tau$ such that $\varphi(x_1,\dots,x_n)\vdash \lnot\varphi(F(x_1,\dots,x_n))$. Namely, if $F=F_i$ under the chosen enumeration, then $\varphi$ may be taken as the definition of $C'_{i+1}$ in the above construction. We wish to take a realization $a_1,\dots,a_n$ of $\tau$ such that $a_1,\dots,a_n$ are algebraically independent. If this were not possible, then $\tau$ implies some algebraic relation among $a_1,\dots,a_n$ (in fact, one of the $a_i$ would be definable from the others). By compactness, some $C_i$ in the above construction implies this relation among $a_1,\dots,a_n$. This is not the case, however, as each $C_i$ has full dimension in $R^n$.
    
    Now consider the real closed field $K:=k\langle a_1,\dots,a_n\rangle$. This field is non-Archimedean as $a_1>\alpha$ for every $\alpha\in k$. Suppose $\sigma:K\to K$ is an automorphism. Then necessarily $\sigma(a_1),\dots,\sigma(a_n)\equiv a_1,\dots,a_n$. In particular, $\sigma(a_1),\dots,\sigma(a_n)\models \tau(x_1,\dots,x_n)$. Moreover, $\sigma(a_1),\dots,\sigma(a_n)\in k\langle a_1,\dots,a_n\rangle$ is exactly to say $(\sigma(a_1,\dots,\sigma(a_n))\in\text{dcl}(a_1,\dots,a_n)$, so there is a definable map $F:R^n\to R^n$ with $F(a_1,\dots,a_n)= (\sigma(a_1),\dots\sigma(a_n))$. Since $F(a_1,\dots,a_n)\models \tau(x_1,\dots,x_n)$, the construction of $\tau$ requires that $\tau(x_1,\dots,x_n)\vdash F(x_1,\dots,x_n)=(x_1,\dots,x_n)$. Then $\sigma$ is the identity on each $a_i$ and $K=\dcl(a_1,\dots,a_n)$ so $\sigma$ is the identity on $K$.

\end{proof}

As remarked in [1], one cannot directly employ the above argument to obtain a rigid non-Archimedean extension of degree $\omega$ by trying to diagonalize against all definable maps $R^\omega\to R^\omega$, as there are uncountably many such. This means that attempting to continue the above proof as a transfinite induction fails at the first limit step $\omega$: we can only possibly define $C_\omega$ to be some sub-cell of the intersection of all $C_i$ for $i<\omega$, but this intersection is not necessarily a definable set and so we have no guarantee of finding a sub-cell. It is essential that the collection we diagonalize against be countable. It will turn out that we can diagonalize against the countable collection of definable maps $R^n\to R^m$ for $n\geq m$.

Suppose we have an extension $K:=k\langle (a_i)_{i<\omega}\rangle$ and an automorphism $\sigma: K\to K$. For each $m<\omega$ there is $n\geq m$ such that $\sigma(a_1),\dots,\sigma(a_m)\in k\langle a_1,\dots,a_n\rangle$. As before, we know that $\sigma(a_1),\dots,\sigma(a_m)\equiv a_1,\dots,a_m$. Also as before, we have $(\sigma(a_1),\dots,\sigma(a_m))\in\dcl(a_1,\dots,a_n)$ so that there is a definable map $F:R^n\to R^m$ with $F(a_1,\dots,a_n)=(\sigma(a_1),\dots,\sigma(a_m))$. To show that $\sigma$ is the identity it suffices to show that $\sigma$ fixes $(a_1,\dots,a_m)$ for arbitrarily large $m$. Then we wish to choose $(a_i)_{i<\omega}$ to satisfy the following: for arbitrarily large $m$, for all $n\geq m$, given a definable map $F:R^n\to R^m$, either $F(a_1,\dots,a_n)=(a_1,\dots,a_m)$ or else $F(a_1,\dots,a_n)\not\equiv (a_1,\dots,a_m)$. This suggests the appropriate generalization of Lemma $\ref{MainLemma}$:

\begin{lemma}\label{ProjLemma}
    For any definable map $F:R^n\to R^m$ and any end cell $C\subseteq R^n$, there exists an end cell $C'\subseteq C$ such that either $F\upharpoonright C'=\pi_{\leq m}$, or else $\pi_{\leq m}(C')\cap F(C')=\emptyset$.
\end{lemma}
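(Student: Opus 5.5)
We reduce Lemma \ref{ProjLemma} to Lemma \ref{MainLemma} by padding $F$ out to a self-map of $R^n$. Define $G:R^n\to R^n$ by
\[G(x_1,\dots,x_n):=(F(x_1,\dots,x_n),x_{m+1},\dots,x_n),\]
so that $G$ keeps the first $m$ coordinates given by $F$ and copies the last $n-m$ coordinates of its input. This $G$ is definable, and $G=\mathrm{id}$ on a set $D$ exactly when $F\upharpoonright D=\pi_{\leq m}$.

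We apply Lemma \ref{MainLemma} to $G$ and $C$ and obtain an end cell $C''\subseteq C$. If $G\upharpoonright C''$ is the identity, then $F\upharpoonright C''=\pi_{\leq m}$, and we take $C':=C''$.

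Otherwise $G(C'')\cap C''=\emptyset$, and this is not yet what the lemma asks for. We know that for each $x\in C''$ the point $(F(x),x_{>m})$ lies outside $C''$. We need the stronger statement that $F(x)$ lies outside $\pi_{\leq m}(C'')$, that is, $F(x)\neq\pi_{\leq m}(y)$ for every $y\in C''$, including $y$ with $y_{>m}\neq x_{>m}$. This mismatch is the main obstacle, and the padding trick alone does not resolve it.

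To handle it, we would instead prove Lemma \ref{ProjLemma} directly, redoing the proof of Lemma \ref{MainLemma} with the target restricted to the first $m$ coordinates. That proof, like the one in [1], should proceed coordinatewise. At stage $i\le m$, one shrinks the cell so that either $F_i(x)=x_i$ throughout, or $F_i(x)$ is separated from the fibre interval $(g_i,h_i)$ in the $i$th coordinate. The tools are piecewise continuity and monotonicity, together with the o-minimal dichotomy that on a smaller end cell a definable function either equals a given coordinate function or stays strictly above or below it. The key observation is that the conditions produced at each stage mention only $x_1,\dots,x_i$ and $F_1,\dots,F_i$.

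Concretely, the induction on $i=1,\dots,m$ keeps the invariant that $F_j=x_j$ on the current cell for all $j<i$. If at some stage $i$ we obtain separation, we shrink the later coordinates so that $F_i(C')$ avoids $\pi_i(\pi_{\leq i}(C'))$ for the relevant prefix. The separation has to be of this robust kind: $F(x)_{\leq i}$ must avoid the whole projection $\pi_{\leq i}(C')$, not merely the particular point $x_{\le i}$. Near $+\infty$ in $x_1$ this is arranged as in Lemma \ref{MainLemma}, for instance by making $F_1(x)$ grow much faster or slower than $x_1$ uniformly in the remaining variables, using definable bounds in $x_1$ obtained from shrinking $(g_j,h_j)$. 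Once $F_i(x)_{\le i}\notin\pi_{\le i}(C')$ for all $x$, it follows that $F(x)\notin\pi_{\le m}(C')$, since $\pi_{\le i}$ factors through $\pi_{\le m}$. If no stage separates, then $F=\pi_{\le m}$ on the final cell. The coordinates $i>m$ are only shrunk to keep the cell an end cell of full dimension and never need to be compared.

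The main difficulty is making the separation uniform over the whole cell rather than pointwise. I expect this uniformity to come from the closure strengthening in Lemma \ref{MainLemma}, $\overline{C'}\cap F(C')=\emptyset$, applied to the lower-dimensional maps $x_{\le i}\mapsto F_{\le i}$ after fixing parameters. If that fails, a fallback is to apply Lemma \ref{MainLemma} in $R^m$ to the map obtained by choosing a definable section $s$ of $\pi_{\le m}$ over $\pi_{\le m}(C)$ and composing with $F$.
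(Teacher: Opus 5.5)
Your diagnosis of the padding trick is right. The coordinatewise induction you adopt as the main line cannot work, however: it already fails at stage $i=1$. For every end cell $C'$ the projection $\pi_1(C')$ is a ray $(\alpha',\infty)$. So if $F_1\neq x_1$ yet $F_1(x)\to\infty$, e.g.\ $F(x)=(x_1+1,x_2,\dots,x_m)$, no shrinking makes $F_1(C')$ avoid $\pi_1(C')$. Having $F_1$ grow faster or slower than $x_1$ does not help, since a ray has no upper end to be pushed past. (This is also why the lemma is false for $m=1$, e.g.\ $F=x_1+1$, and is only used for $m\geq 2$.)

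In such cases the separation has to come from the interplay of coordinates. For $F(x_1,x_2)=(x_1+1,x_2)$ the end cell $(0,\infty)\times(x_1^2,x_1^2+1)$ works because its fibres over $x_1$ and $x_1+1$ are disjoint. Your invariant ``$F_j=x_j$ for all $j<i$ before separating at stage $i$'' rules out exactly this configuration. Nor is the proof of Lemma \ref{MainLemma} coordinatewise. Its unbounded case is handled by comparing a curve $f$ with its reparametrized image $f^*$ (Case 3), and by the diagonal curve $f_{\psi(x)}$ through the family $f_r$. So there is no coordinatewise argument there to imitate.

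Your fallback is the right idea; it is exactly the paper's proof. As written, though, it is only named, and the steps that make it work are missing.

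First shrink $C$ so that it is one-parameter, $F$ is continuous on $C$, and $F(P)\neq\pi_{\leq m}(P)$ for all $P\in C$. (If the set where $F=\pi_{\leq m}$ contains an end cell, you are already done.)

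Next take the section $s(x_1,\dots,x_m)=(x_1,\dots,x_m,\frac{g_{m+1}(x_1)+h_{m+1}(x_1)}{2},\dots,\frac{g_n(x_1)+h_n(x_1)}{2})$, which maps $\pi_{\leq m}(C)$ into $C$. Then $\tilde F:=F\circ s$ has no fixed point on $\pi_{\leq m}(C)$. Hence Lemma \ref{MainLemma} cannot return the identity alternative, and gives an end cell $D\subseteq\pi_{\leq m}(C)$ with $\overline D\cap\tilde F(D)=\emptyset$.

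This only controls the $m$-dimensional slice $s(D)$, which is not an end cell of $R^n$. The key remaining step is to thicken it. The sets $U:=F^{-1}(R^m\setminus\overline D)\cap C$ and $V:=\pi_{\leq m}^{-1}(D)\cap C$ are open; this is where the closure clause and the continuity of $F$ on $C$ are used. Both contain $s(D)$, so $U\cap V$ is unbounded on the right and contains an end cell $C'$. Then $F(C')\subseteq R^m\setminus\overline D$ while $\pi_{\leq m}(C')\subseteq D$, giving the required disjointness.
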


We will again postpone the proof to Section 3 and use Lemma \ref{ProjLemma} to prove the second main result; the particular construction implies Theorem \ref{FiniteThm} as well.

\begin{theorem}\label{InfiniteThm}
    There exists a non-Archimedean rigid real closed field of transcendence degree $\omega$ over $k$.
\end{theorem}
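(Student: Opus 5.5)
We build a decreasing sequence of end cells in growing dimensions and realize the resulting type by an algebraically independent sequence $(a_i)_{i<\omega}$. Enumerate all pairs $(n,m)$ with $n\geq m\geq 1$, together with all definable maps $F:R^n\to R^m$, as a single countable list $(F_j,n_j,m_j)_{j<\omega}$. We construct end cells $D_j\subseteq R^{N_j}$ with $N_j$ nondecreasing and $N_j\to\infty$. Inclusion is taken in the projected sense: $\pi_{\leq N_j}(D_{j+1})\subseteq D_j$. Equivalently, we view each $D_j$ as the cylinder $D_j\times R^{\omega}$, so that the cylinders form a decreasing chain.

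At stage $j$ we first enlarge the dimension if necessary. If $N_j<n_j$, replace $D_j$ by $D_j\times R^{n_j-N_j}$, which is still an end cell since the constant functions $\pm\infty$ are allowed as bounds. Now let $N=\max(N_j,n_j)$. Regard $F_j$ as a map $R^N\to R^{m_j}$ by precomposing with $\pi_{\leq n_j}$; it is still definable. Apply Lemma \ref{ProjLemma} to this map and the current cell to obtain an end cell $C'$. Set $D_{j+1}:=C'\cap\{j+1<x_1\}$ and also bump the dimension by one at each stage, so that $N_j\to\infty$.

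The partial type $\tau=\bigcup_j D_j$ in variables $(x_i)_{i<\omega}$ is consistent by compactness, since every finite part is a nonempty end cell. As in the proof of Theorem \ref{FiniteThm}, $\tau$ has a realization that is algebraically independent. Any algebraic relation among finitely many coordinates would follow from a single $D_j$, and this is impossible because $D_j$ is open in $R^{N_j}$, so its projection to any finite set of coordinates is open. The resulting field $K=k\langle (a_i)_{i<\omega}\rangle$ has transcendence degree $\omega$, and it is non-Archimedean because $a_1>j$ for all $j$. One also checks that $a_i$ lies in the convex hull of $k\langle a_1\rangle$, but this is not needed.

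Now let $\sigma$ be an automorphism of $K$ and fix $m$. Choose $n\geq m$ with $\sigma(a_1),\dots,\sigma(a_m)\in\dcl(a_1,\dots,a_n)$, and take a definable $F:R^n\to R^m$ with $F(\bar a_{\leq n})=\sigma(\bar a_{\leq m})$. This pair $(F,n,m)$ equals $(F_j,n_j,m_j)$ for some $j$, and $\bar a_{\leq N}$ lies in the cell $C'$ produced at stage $j$. There are two cases.

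\textbf{Case 1: $F_j\upharpoonright C'=\pi_{\leq m}$.} Then $\sigma$ fixes $a_1,\dots,a_m$.

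\textbf{Case 2: $\pi_{\leq m}(C')\cap F_j(C')=\emptyset$.} We derive a contradiction from the type equality $\sigma(\bar a_{\leq m})\equiv\bar a_{\leq m}$. The tuple $\bar a_{\leq m}$ lies in the definable set $\pi_{\leq m}(C')$, hence so does $\sigma(\bar a_{\leq m})=F_j(\bar a_{\leq N})$. But $F_j(\bar a_{\leq N})\in F_j(C')$, which is disjoint from $\pi_{\leq m}(C')$, a contradiction.

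Since this holds for every $m$, $\sigma$ fixes every $a_i$, and therefore $\sigma$ is the identity on $K=\dcl((a_i)_i)$.

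I expect the main obstacle to be bookkeeping rather than any new idea. One point is keeping the construction coherent across dimensions, namely checking that padding an end cell by $R$ factors and precomposing $F$ with a projection preserve the hypotheses of Lemma \ref{ProjLemma}. The other is verifying that the projection $\pi_{\leq m}(C')$ is a definable set (it is), so that membership in it is part of the type of $\bar a_{\leq m}$.
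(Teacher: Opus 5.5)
Your strategy matches the paper's: diagonalize against the countably many definable maps $R^n\to R^m$ using Lemma \ref{ProjLemma}, realize the resulting type by an algebraically independent sequence, and use $\sigma(\bar a_{\leq m})\equiv\bar a_{\leq m}$ to rule out the disjointness alternative. Precomposing $F_j$ with $\pi_{\leq n_j}$, instead of projecting the cell to $R^{n_j}$ and pulling back, is harmless and slightly cleaner bookkeeping.

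There is, however, a concrete error. You enumerate pairs with $n\geq m\geq 1$, but Lemma \ref{ProjLemma} is false when $m=1$. The paper restricts to $2\leq m\leq n$ for this reason. Take $F(x_1,\dots,x_n)=x_1+1$. On any end cell $C'$ with first factor $(\alpha',\infty)$, $F$ never agrees with $\pi_{\leq 1}$. Yet $\pi_{\leq 1}(C')=(\alpha',\infty)$ and $F(C')=(\alpha'+1,\infty)$ always intersect. So at the stage $j$ where your enumeration reaches this map, no admissible $C'$ exists and the construction cannot continue. Similarly, Lemma \ref{MainLemma} fails in dimension $1$ (take $x\mapsto x+1$ on $(\alpha,\infty)$), and the proof of Lemma \ref{ProjLemma} invokes it in dimension $m$. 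This is not a technicality: it reflects that $k\langle a_1\rangle$ with $a_1$ infinite is never rigid, since $a_1\mapsto a_1+1$ extends to an automorphism.

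The repair is easy. Enumerate only maps $R^n\to R^m$ with $2\leq m\leq n$, and in the last step show that $\sigma$ fixes $(a_1,\dots,a_m)$ for every $m\geq 2$. This already fixes $a_1$, hence every $a_i$. With that change your argument is essentially the paper's proof.
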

\begin{proof}
    Over all pairs $2\leq m\leq n$, enumerate all $k$-definable maps from $R^n\to R^m$, as $F_0,F_1,\dots$. We construct a partial type $\tau((x_i)_{i<\omega})$ in countably many variables as an intersection of a countable decreasing sequence of definable subsets $C_0\supseteq C_1\supseteq\dots$ of $R^\omega$. More specifically, each $C_i$, say defined by a formula $\varphi_i(x_1,\dots,x_{N_i})$, will be an end-cell when viewed as a definable subset of $R^{N_i}$. We will have $\varphi_{i+1}\to\varphi_{i}$ for all $i$ and we will take $\tau$ to be $\{\varphi_i(x_1,\dots,x_{N_i})\}_{i<\omega}$. Take $C_0$ to be the definable set $\{x_1>0\}$. Supposing $C_i$ has been defined and is an end cell in $R^{N_i
    }$, consider $F_i:R^n\to R^m$, $n\geq m\geq 2$. If $N_i<n$ then let $D$ be the $n$-dimensional end-cell $C_i\times R^{n-N_i}$. Now let $C'_{i+1}$ be the cell supplied by Lemma \ref{ProjLemma} applied to $D$ and $F_i$. Take $C_{i+1}$ to be $C'_{i+1}\cap \{x_1>i+1\}$. Otherwise if $N_i\geq n$ let $D$ be $\pi_{\leq n}(C_i)$ which is an end-cell of dimension $n$. Take $C'_{i+1}$ to be the end-cell supplied by Lemma \ref{ProjLemma} applied to $D$ and $F_i$. Then $C_i\cap \pi^{-1}_{\leq i}(C'_{i+1})$ is an end-cell contained in $C_i$. Take $C_{i+1}$ to be $C_i\cap \pi^{-1}_{\leq i}(C'_{i+1})\cap\{x_1>i+1\}$. (It is worth noting that the splitting into two cases is really non-essential here and is only mentioned for the sake of visualization; the two cases are identical if we think only of the defining formulas involved and the subsets of $R^\omega$ which they define). We may view $C_{i+1}$ as an end-cell in $R^{N_{i+1}}$ where $N_{i+1}:=\max\{n,N_i\}$. Now we have either that $F(x_1,\dots,x_n)=(x_1,\dots,x_m)$ for any $(x_1,\dots, x_{N_{i+1}})\in C_{i+1}$, or else for all $(x_1,\dots, x_{N_i+1})\in C_{i+1}$, there is no point $(y_1,\dots,y_{N_{i+1}})\in C_{i+1}$ with $(y_1,\dots,y_m)=F(x_1,\dots,x_n)$
    
    Now let $\tau((x_i)_{i<\omega})$ be the partial type in countable many variables given by $\bigcap_{i<\omega} C_i$. Then $\tau((x_i)_{i<\omega})\vdash x_1>\alpha$ for all $\alpha\in k$ and for any definable map $F:R^n\to R^m$, either $\tau((x_i)_{i<\omega})\vdash F(x_1,\dots,x_n)=(x_1,\dots,x_m)$ or else there is $\varphi(x_1,\dots,x_m)\in \tau((x_i)_{i<\omega})$ such that $\tau((x_i)_{i<\omega})\vdash \lnot\varphi(F(x_1,\dots,x_n))$. A little more precisely, there is a formula $\psi(x_1,\dots,x_n)\in\tau((x_i)_{i<\omega})$ such that $\psi(x_1,\dots,x_n)\vdash \lnot \exists y_{m+1}\dots y_n\psi(F(x_1),\dots,F(x_m),y_{m+1},\dots,y_n)$. In fact, if $F=F_i$, then $\psi$ may be taken as the formula defining $C_{i+1}'$ in the above construction.

    Since each $C_i$ is an end-cell, and in particular has full dimension as a subset of $R^{N_i}$, we may argue as in the finite case to take $(a_i)_{i<\omega}$ a realization in $R^\omega$ of $\tau(x_i)_{i<\omega}$ such that $(a_i)_{i<\omega}$ are algebraically independent. Now consider the real closure $K:=k\langle (a_i)_{i<\omega}\rangle$. This field is non-Archimedean as $a_1>\alpha$ for all $\alpha\in k$. Let $\sigma:K\to K$ be some automorphism of $K$. To show that $\sigma$ is the identity map, it suffices to show that $\sigma$ fixes the tuple $(a_1,\dots,a_m)$ for arbitrarily large $m$. In fact, we will show this is true for all $m\geq 2$.

    The tuple $(\sigma(a_1),\dots,\sigma(a_m))\in k\langle a_1,\dots,a_n\rangle$ for some sufficiently large $n\geq m$. Then there must be some definable map $F:R^n\to R^m$ with $F(a_1,\dots,a_n)=(\sigma(a_1),\dots,\sigma(a_m))$. Since $\sigma(a_1),\dots,\sigma(a_m)$ also satisfies the restriction of $\tau$ to the first $m$ variables, the construction of $\tau$ requires that $\tau((x_i)_{i<\omega})\vdash F(x_1,\dots,x_n)=(x_1,\dots,x_m)$. Hence $(\sigma(a_1),\dots,\sigma(a_m))=(a_1,\dots,a_m)$, which is what we needed to show.
\end{proof}

The fact that we obtained the desired conclusion for all $m\geq 2$ in the last paragraph of the proof means that each subfield $k\langle a_1,\dots,a_m\rangle$, $m\geq 2$, is itself rigid; hence this proof implies Theorem \ref{FiniteThm}. It also follows from the proof that the field $K$ has no nontrivial endomorphisms.

\section{Proving the key lemmas}
In [1], the main advantage of working in just 2 dimensions is that all boundary functions depend on a single variable. Accordingly, in the $n$-dimensional case we introduce the following stronger notion of end-cell:

\begin{definition}
    A \textit{one-parameter end cell} $C\subseteq R^n$ is an end cell all of whose boundary functions depend only on the variable $x_1$. That is, $C$ has the form:
    \[(\alpha,\infty)\times(g_2(x_1),h_2(x_1))\times\dots\times(g_n(x_1),h_n(x_1))\]
\end{definition}

Every open definable set unbounded above in the first coordinate (``unbounded on the right") contains an open cell unbounded on the right, by cell decomposition. Any open cell unbounded on the right contains an end-cell: this follows straightforwardly from piecewise continuity of definable functions. In order to make good use of 1-parameter end-cells, we need the following:
\begin{lemma}\label{ParamLemma}
Every open cell with first coordinate unbounded above contains a 1-parameter end cell.
\end{lemma}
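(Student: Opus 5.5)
We argue by induction on $n$, building the one-parameter end cell one coordinate at a time. The key idea is that along a ``generic'' curve $x_1\mapsto (x_1,\gamma_2(x_1),\dots,\gamma_n(x_1))$ lying in the cell, we can fatten the curve into a tube whose radius depends only on $x_1$.

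First we reduce to a convenient form. Let $D$ be an open cell unbounded on the right. As remarked before the lemma, $D$ contains an end cell
\[C=(\alpha,\infty)\times(g_2,h_2)\times\dots\times(g_n,h_n)\]
with all boundary functions continuous. So it suffices to find a one-parameter end cell inside such a $C$.

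Next we choose a definable curve inside $C$. By definable choice (definable Skolem functions, used in Section 2), pick definable $\gamma_2(x_1),\dots,\gamma_n(x_1)$ recursively with $\gamma_i(x_1)$ strictly between $g_i$ and $h_i$ evaluated at $(x_1,\gamma_2(x_1),\dots,\gamma_{i-1}(x_1))$. For instance, take the midpoint when both bounds are finite, and $g_i+1$, $h_i-1$ or $0$ in the infinite cases. By piecewise continuity, after enlarging $\alpha$ each $\gamma_i$ is continuous on $(\alpha,\infty)$. Then $\Gamma(x_1)=(x_1,\gamma_2(x_1),\dots,\gamma_n(x_1))$ is a continuous definable curve in $C$.

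Then we find a tube radius depending only on $x_1$. For $x_1>\alpha$, define $\varepsilon(x_1)$ to be the supremum, capped at $1$, of those $e>0$ such that the box
\[\{x_1\}\times\prod_{i\geq 2}(\gamma_i(x_1)-e,\gamma_i(x_1)+e)\]
is contained in $C$. This $\varepsilon$ is definable and takes values in $k\cup$ definable functions. It is positive at every point because the fiber $C_{x_1}$ is open and contains the point $(\gamma_2(x_1),\dots,\gamma_n(x_1))$. Fiber openness follows from the continuity of the $g_i,h_i$, by induction on coordinates. Replace $\varepsilon$ by $\varepsilon/2$, so the closed box at each $x_1$ still lies in $C$. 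Setting $g'_i(x_1)=\gamma_i(x_1)-\varepsilon(x_1)$ and $h'_i(x_1)=\gamma_i(x_1)+\varepsilon(x_1)$ gives the set $\{x_1>\alpha\}\cap\prod(g'_i,h'_i)$. This set lies in $C$, is of the one-parameter form, and is unbounded on the right.

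Finally we obtain continuity. By piecewise continuity of the one-variable function $\varepsilon$, after enlarging $\alpha$ once more all $g'_i,h'_i$ are continuous on $(\alpha',\infty)$. The result is then a one-parameter end cell contained in $D$.

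The step we expect to be the main obstacle is justifying that the fibers $C_{x_1}$ are open and that $\varepsilon(x_1)>0$. This uses continuity of the boundary functions in all preceding variables, which is exactly why we first pass to an end cell. We also have to handle the definability of the supremum, which is routine in o-minimal structures.
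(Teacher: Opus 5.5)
Your proof is correct, and it takes a genuinely different and much shorter route than the paper.

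The paper inducts on $n$. Assuming $\pi_{\leq n-1}(C)$ is already one-parameter, it shrinks so that $g_n,h_n$ are $C^1$ and each $\partial g_n/\partial x_i$ has constant sign. It then removes the dependence on $x_i$ one variable at a time:
\begin{itemize}
\item it replaces $g_n$ by $g_n(x_1,\dots,\tilde{h}_i(x_1),\dots,x_{n-1})$;
\item it cuts the $x_i$-range down to $x_i<\tilde{h}_i(x_1)$, so the new lower boundary stays above the old one;
\item it re-invokes the inductive hypothesis on an auxiliary open set at every step.
\end{itemize}

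You instead fix one continuous definable section $x_1\mapsto(x_1,\gamma(x_1))$ of $C$ and fatten it into a box-tube of radius $\varepsilon(x_1)$. Each property of $\varepsilon$ needs very little:
\begin{itemize}
\item positivity needs only that $C$ is open;
\item definability needs only that bounded definable sets have suprema;
\item continuity on a tail needs only the one-variable monotonicity theorem.
\end{itemize}
The new left endpoint lies in $k$ automatically, since the finitely many discontinuities of the $\emptyset$-definable function $\varepsilon$ are themselves $\emptyset$-definable.

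So you avoid derivatives, the $C^1$ shrinking, and the sign case analysis. Two small corrections to your write-up:
\begin{itemize}
\item You use no induction at all, so your opening sentence announcing an induction on $n$ misdescribes your own argument.
\item Your $\gamma_i$ are already continuous on all of $(\alpha,\infty)$, as compositions of continuous functions, so enlarging $\alpha$ there is unnecessary.
\end{itemize}

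Your method also gives a stronger, reusable statement: any open definable set containing the graph of a continuous definable curve over a tail $(\alpha,\infty)$ contains a one-parameter end cell around that curve. This is exactly the situation in Cases 2 and 3 of the proof of Lemma~\ref{MainLemma} and in the proof of Lemma~\ref{ProjLemma}, where the paper instead appeals to cell decomposition. By comparison, the paper's construction gains essentially nothing beyond keeping the new boundary functions close to the original ones.
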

\begin{proof}
    Let $C$ be the open cell given. After shrinking we are free to assume $C$ is an end-cell. We induct on the dimension of $C$. The claim is immediate when $n=\dim(C)=2$. By induction we may assume that $C$ has the form:
    \[(\alpha,\infty)\times (g_2(x_1),h_2(x_1))\times\dots\times(g_{n-1}(x_1), h_{n-1}(x_1))\times (g_n(x_{<n}), h_n(x_{<n}))\]
    By possibly shrinking to a smaller end-cell we may further assume each boundary function $g_i,h_i$ is $C^1$ on $\pi_{\leq i}(C)$.
    
    Let $i<n$. For brevity we will let $\pi$ denote $\pi_{\leq n-1}$. Our goal is to shrink and replace $C$ with a smaller end-cell such that $\pi(C)$ is still one-parameter, and for all $i<n$,  $0=\frac{\partial g_n}{\partial x_i}=\frac{\partial h_n}{\partial x_i}$ on all of $\pi(C)$. We do this by successively replacing the boundary functions $g_n,h_n$. We may handle $g_n$ and $h_n$ independently and the two cases are similar, so we will only present the method for replacing $g_n$.
    
    Fix some $i<n$ and suppose by induction that for all $2\leq j<i$, $0=\frac{\partial g_n}{\partial x_j}$ on all of $\pi(C)$. Definably decompose $\pi(C)$ according to the sign of $\frac{\partial g_n}{\partial x_i}$. One of the three sets in the decomposition must contain an open cell unbounded on the right and therefore by the inductive hypothesis contains a 1-parameter end cell. Hence, replacing $C$, we may assume that $\frac{\partial g_n}{\partial x_i}$ has the same sign on all of $\pi(C)$. If $\frac{\partial g_n}{\partial x_i}=0$ on all of $\pi(C)$ then we are already done. The cases $\frac{\partial g_n}{\partial x_i}>0$ and $\frac{\partial g_n}{\partial x_i}<0$ are similar, so we treat only the former. Choose some $\epsilon \in (0,1)\cap k$ and let $\Tilde{h}_i:=h_i+\epsilon(g_i-h_i)$. Let $\Tilde{g}_n(x_{<n}):=g_n(x_1,\dots,\tilde{h}_i(x_1),\dots,x_{n-1})$. Note that on $\pi(C)$, this function is $C^1$ and has $\frac{\partial \tilde{g}_n}{\partial x_i}=0$. We have also preserved $\frac{\partial \tilde{g}_n}{\partial x_j}=0$ for all $2\leq j<i$. By the assumption that $\frac{\partial g_n}{\partial x_i}>0$, we have that $\tilde{g}_n>g_n$ for all points of $\pi(C)$ satisfying $x_i<\tilde{h}_i(x_{1})$. 

    For any point of the form $(x_1,\dots,\tilde{h}_i(x_1),\dots,x_{n-1})\in\pi(C)$ we have
    \begin{flalign}
        \tilde{g}_n(x_1,\dots,\tilde{h}_i(x_1),\dots,x_{n-1}) &=g_n(x_1,\dots,\tilde{h}_i(x_1),\dots,x_{n-1})\nonumber\\
        &<h_n(x_1,\dots,\tilde{h}_i(x_1),\dots,x_{n-1})
    \end{flalign}
    
    Now $D_0:=\pi(C)\cap\{\tilde{g}_n(x_{<n})<h_n(x_{<n})\}$ is a definable subset of $\pi(C)$; moreover, continuity of $\tilde{g}_n,h_n$ on $\pi(C)$ imply that $D_0$ is an open subset of $\pi(C)$. $D_0$ also contains all points of $\pi(C)$ which satisfy $x_i=\tilde{h}_i(x_1)$ by (1). Then by openness, for every $a_1\in(\alpha,\infty)$, $D_0$ must also contain points with $x_1=a_1$ and $x_i<\tilde{h}_i(x_1)$. Then $D_1:=D_0\cap \{x_i<\tilde{h}_i(x_1)\}$ is again definable, open by continuity of $\tilde{h}_i$, and unbounded in the first coordinate by the previous sentence. Hence, by the inductive hypothesis, we may find $D$ a one-parameter end-cell inside $D_1$. Now $\tilde{g}_n<h_n$ on all of $D$, and since every point of $D$ satisfies $x_i<\tilde{h}_i(x_1)$, we have $\tilde{g}_n>g_n$ on all of $D$. Hence $D\times(\tilde{g}_n,h_n)\subseteq C$ and satisfies our requirements, completing the inductive step.
\end{proof}

With Lemma \ref{ParamLemma}, we can now prove Lemma \ref{MainLemma} in the same manner as in [1].
\\
\\
\textbf{Proof of Lemma \ref{MainLemma}}

    Let $C=(\alpha,\infty)\times (g_2,h_2)\times\dots\times(g_n,h_n)$ be an end-cell in $R^n$ and let definable $F:R^n\to R^n$ be given. After possibly shrinking $C$, we may assume that $F$ is continuous on all of $C$ and that $C$ is a 1-parameter end-cell.\\
    
    \textbf{Case 1:} If there is an end cell $C'\subseteq C$ such that $F\upharpoonright C'$ is the identity, then we are done.
    \\
    \\
    If we are not in Case 1, then definably decompose $C$ into the set of points fixed by $F$ and those not fixed by $F$. By assumption, the former contains no end-cell, so the latter must. Hence we may assume that $F(P)\neq P$ for all $P\in C$.

    Let $f: (\alpha,\infty)\to R^{n-1}$ be any definable function continuous on $(\alpha,\infty)$ such that $(x,f(x))\in C$ for all $x\in(\alpha,\infty)$. Let $\mu_f:R\to R$ be the definable function $\mu_f(x):=\pi_1(F(x,f(x))$ and let $\nu_f:R\to R^{n-1}$ be the definable function $\nu_f(x):=(\pi_2(F(x,f(x)),\dots,\pi_n(F(x,f(x)))$.

    \textbf{Case 2:} Suppose there is some definable continuous $f:(\alpha,\infty)\to R^{n-1}$ whose graph $\Gamma(f)\subseteq C$ and $\mu_f(x)\not\to \infty$ as $x\to \infty$. By piecewise monotonicity, there is $\alpha'>\alpha$ and $\beta$ such that $x>\alpha'\implies \mu_f(x)<\beta$. Moreover, we are free to assume that $\alpha'>\beta$. By continuity, $D:=(\pi_1\circ F)^{-1}(-\infty,\beta)\cap \{x_1>\alpha'\}\cap C$ is an open definable subset of $C$. By assumption, $D$ contains $\Gamma(f\upharpoonright(\alpha',\infty))$ so $D$ is unbounded on the right. Then $D$ contains an end-cell $C'$. Every point of $\overline {C'}$ has $x_1\geq \alpha'$ while every point of $F(C')$ has $x_1<\beta$ so that $\overline{C'}\cap F(C')=\emptyset$\\

    Now if we are not in Case 2, we may assume that every definable continuous $f:(\alpha,\infty)\to R^{n-1}$ with $\Gamma(f)\subseteq C$ has $\mu_f(x)\to \infty$ as $x\to \infty$. Given such an $f$, by piecewise monotonicity, after possibly shrinking $C$, we may assume that $\mu_f$ is strictly increasing. Then for $x>\max\{\mu_f(\alpha),\alpha\}$ we define $f^*(x):=\nu_f(\mu_f^{-1}(x))$. Then $\Gamma(f^*)$ is (possibly a truncation of) $F(\Gamma(f))$.

    \textbf{Case 3:} Suppose there is some $f$ as above such that $f(x)\neq f^*(x)$ for large enough $x$. By piecewise monotonicity, there must be some coordinate $2\leq i\leq n$ such that $f_i(x)\neq f^*_i(x)$ for sufficiently large $x$. After possibly increasing $\alpha$ and shrinking $C$ we are free to assume that either $f_i>f^*_i$ or $f^*_i<f_i$ on their whole common domain (say $x>\beta)$. We will treat just the first case as the second is similar.
    
    Let $\phi_0,\phi_1$ be definable continuous functions $R\to R$ with $\phi_0(x)<f^*_i(x)<\phi_1(x)<f_i(x)$ for all $x>\beta$. By possibly shrinking $C$ towards the right and possibly replacing $g_i$ with $\phi_1+\epsilon(f_i-\phi_1)$ for some $\epsilon\in(0,1)\cap k$, we may assume that $\phi_1(x)<g_i(x)$ for all $x>\beta$. Let $D$ be the definable open set $\{\phi_0(x_1)<x_i<\phi_1(x_1)\}$. Then since $F$ is continuous on $C$ and $F(\Gamma(f))\subseteq D$, the definable set $A:=F^{-1}(D)\cap C$ is open and contains all of $\Gamma(f)$. Then $A$ is also unbounded on the right, so $A$ contains some 1-parameter end cell $C'=(\alpha',\infty)\times(\tilde{g}_2,\tilde{h}_2)\times\dots\times(\tilde{g}_n,\tilde{h}_n)$. We have ensured $\phi_1(x)<g_i(x)\leq\tilde{g}_i(x)$ for all $x\in(\alpha',\infty)$ and $F(C')\subseteq D$. Every point in $\overline{C'}$ satisfies $x_i \geq \tilde{g}_i(x_1)>\phi_1(x_1)$ so that $\overline{C'}\cap D=\emptyset$ which implies $\overline{C'}\cap F(C')=\emptyset$.
    \\
    
    Now suppose towards contradiction that we are in none of the above cases. For each $r\in[0,1]\cap k$ consider the definable function
    \[f_r(x):= (g_2(x)+r(h_2(x)-g_2(x)),\frac{g_3(x)+h_3(x)}{2},\dots,\frac{g_n(x)+h_n(x)}{2})\]
    As we are not in Case 3, for each $r\in k\cap[0,1]$ there is $m_r$ such that $x>m_r$ implies $f_r^*(x)=f_r(x)$. Taking infima makes the assignment $r\to m_r$ definable. Then there is some subinterval $[a,b]\subseteq [0,1]\cap k$ with $a<b$ on which $m_r$ is a continuous, monotone function of $r$. Shrinking $C$ by replacing $g_2$ with $g_2+a(h_2-g_2)$ and $h_2$ with $g_2+b(h_2-g_2)$, we may assume that $m_r$ is continuous and monotone on the whole interval $[0,1]\cap k$. Then by considering either $m_0$ or $m_1$ we obtain $m$ such that $x>m$ implies $f^*_r(x)=f_r(x)$ for all $r$. Shrinking $C$ again by replacing $\alpha$ with $\max\{\alpha,m\}$ we may now assume that $F$ maps $\Gamma(f_r)$ into itself for all $r$.

    Now let $\psi:(\alpha,\infty)\to(0,1)$ be an increasing definable continuous bijection. Define $f(x):=f_{\psi(x)}(x)$. For any $r$, if $f_r(x)=f(x)$, then as the graphs of $f_r,f_{r'}$ are disjoint for distinct $r,r'$, it must be that $r=\psi(x)$. This is to say that each $\Gamma(f_r)$ intersects $\Gamma(f)$ at a unique point, namely where $x=\psi^{-1}(r)$. As we are not in Case 3, after restricting attention to sufficiently large $x$, we may assume $F$ maps $\Gamma(f)$ into itself. For such $x$, the point $(x,f(x))$ is on both $\Gamma(f)$ and $\Gamma(f_{\psi(x)})$. Moreover, as $F$ maps each of these graphs into itself, $F(x,f(x))$ must be another intersection point of these graphs. But $(x,f(x))$ is the unique intersection of $\Gamma(f)$ and $\Gamma(f_{\psi(x)})$. Hence $(x,f(x))$ is fixed by $F$, contrary to the assumption that we are not in Case 1. \qed
    \\

And now we prove the second key lemma:
\paragraph{Proof of Lemma \ref{ProjLemma}}
    Let an end-cell $C\subseteq R^n$ and definable function $F:R^n\to R^m$ be given. Let $\pi$ denote $\pi_{\leq m}$. After shrinking we may suppose $C$ is 1-parameter. Definably decompose $C$ into the set of points where $F=\pi$ and where $F\neq \pi$. If the former definable subset contains an end-cell, we are done. Otherwise, the latter set contains an end-cell and so we may proceed assuming that for all points $P\in C$, $F(P)\neq \pi(P)$. After possibly shrinking further, we may also assume that $F$ is continuous on all $C$.

    Note that $\pi(C)$ is also a 1-parameter end cell of dimension $m$. Say $C=(\alpha,\infty)\times (g_2,h_2)\times\dots\times(g_n,h_n)$. Let $G:R^m\to R^n$ be the definable map \[G:(x_1,\dots,x_m)\mapsto (x_1, \dots,x_m,\frac{g_{m+1}(x_1)+h_{m+1}(x_1)}{2},\dots,\frac{g_n(x_1)+h_n(x_1)}{2})\]
    Let $\tilde{F}:R^m\to R^m$ be the definable map $F\circ G$. By the assumption that $F(P)\neq \pi(P)$ for all $P\in C$, we have $\tilde{F}(Q)\neq Q$ for all $Q\in \pi(C)$. Then applying Lemma $\ref{MainLemma}$ to $\pi(C)$ we obtain an end-cell $D\subseteq \pi(C)$ such that $\overline{D}\cap \tilde{F}(D)=\emptyset$. By the assumption that $F$ is continuous on all $C$, the set $U:=F^{-1}(R^m\setminus \overline{D})\cap C$ is an open definable subset of $C$, and this set contains all of $G(D)$. Likewise, continuity of projection implies that $V:=\pi^{-1}(D)\cap C$ is an open definable subset of $C$ containing all of $G(D)$. Then $U\cap V$ is an open definable subset of $C$ containing all of $G(D)$; in particular it is unbounded on the right. The choice of $U,V$ implies that $F(U\cap V)\cap \pi(U\cap V)=\emptyset$. Take some end-cell contained inside $U\cap V$ to conclude the claim.\qed

\section{References}
[1] David Marker and Charles Steinhorn. Rigid Real Closed Fields. Preprint. https://arxiv.org/abs/2407.00542.
\\

\noindent[2] 
  Lou van den Dries. \textit{Tame Topology and O-Minimal Structures}. Cambridge University Press, 1998.

\end{document}